\def\be{\begin{equation}}
\def\ee{\end{equation}}
\def\bq{\begin{eqnarray}}
\def\eq{\end{eqnarray}}
\def\beq{\begin{eqnarray*}}
\def\eeq{\end{eqnarray*}}
\newtheorem {theorem} {Theorem}
\newtheorem {proposition} [theorem]{Proposition}
\newtheorem {corollary} [theorem]{Corollary}
\begin{document}

\title[Quadratic vector fields with invariant algebraic curve of large degree]
{Quadratic vector fields \\with invariant algebraic curve of large
degree}

\author[ R. Ramirez and N. Sadovskaia] { Rafael Ramirez$^1$ and Natalia Sadovskaia$^2$}

\address{$^1$ Departament d'Enginyeria Inform\`{a}tica i Matem\`{a}tiques, Universitat Rovira i Virgili, Avinguda dels Pa\"{\i}sos Catalans 26, 43007 Tarragona, Catalonia, Spain.}
\email{rafaelorlando.ramirez@urv.cat}

\address{$^2$ Departament de Matem\`{a}tica Aplicada II, Universitat Polit\`{e}cnica de Catalunya, C. Pau Gargallo 5, 08028 Barcelona, Catalonia, Spain.}
\email{natalia.sadovskaia@upc.edu}

\subjclass{Primary 34C05, 34A34, 34C14.}

\keywords{limit cycles, algebraic limit cycles, polynomial vector
fields}
\date{}
\dedicatory{}

\maketitle

\begin{abstract}
We construct a polynomial planar vector field of degree two with
one invariant algebraic curves  of large degree. We exhibit an
explicit quadratic vector fields which invariant curves of degree
nine , twelve , fifteen and eighteen degree.
 .
\end{abstract}

\section{Introduction}

In the paper \cite{Ll1} the authors present  for the first time
examples of algebraic limit cycles of degree greater than 4 for
planar quadratic vector fields. They also give an example of an
invariant algebraic curve of degree 12 and genus one for which the
quadratic system has no Darboux integrating factors or first
integrals \cite{Ll2}.

One of the first example of quadratic planar system with invariant
algebraic curve of genus one is the Filipstov differential system
\cite{RS}
\[
\left\{%
\begin{array}{cc}
\dot{x}=16(1+a)x-6(2+a)x^2+(2+12x)y\\
\dot{y}=3a(1+a)x^2+(15(1+a)-2(9+5a)x)y+16y^2
\end{array}%
\right.
\]

 which possesses the irreducible invariant algebraic curve of
 degree four and genus one
$$g(x,y)\equiv{y^3+\displaystyle\frac{1}{4}(3(1+a)-6(1+a)x)y^2+\displaystyle\frac{3}{4}(1+a)ax^2y+\displaystyle\frac{3}{4}(1+a)a^2x^4}=0.$$

Another example we can find in the paper \cite{Ll1}.

This diferential system   \[\dot{x}=2(1+2x-2ax^2+6xy),\quad
\dot{y}=8-3a-14ax-2axy-8y^2\]a has the invariant algebraic curve
of degree four and genus one
\[x^2y^2+xy+ax^3-x^2+x+1/4=0\]

In the  both case we have one algebraic limit cycles.

In \cite{Ll1} we can find a surprisingly simple quadratic system

\[
\left\{%
\begin{array}{cc}
\dot{x}=xy+x^2+1\\
\dot{y}=3y^2-\displaystyle\frac{81}{2}x^2+\displaystyle\frac{57}{2}
\end{array}%
\right.
\]
which has invariant algebraic curve of degree 12 and genus one

\[
\left\{%
\begin{array}{cc}
-442368-7246584x^2+71546517x^4-97906500x^6+41343750x^8-23437500x^{10}+\\
48828125x^{12}+(322272x-12126312x^3+23463000x^5+1125000x^7+15625000x^9)y-
\\(98784-711288x^2+5058000x^4-375000x^6)y^2+(32928x-1124000x^3)y^3-5488y^4=0
\end{array}%
\right.
\]

The main result of this paper is to construct a set of quadratic
planar vector field with an invariant curves of large degree and
genus greater o equal to one. We apply the normal form of the
quadratic fields deduced by J.Llibre \cite{Ll2}.

\section{Quadratic  vector fields with a given  invariant algebraic curves of degree nine}

In this section we determine three quadratic vector field with
invariant curves of degree nine and genus one.

\begin{corollary}
The following quadratic differential systems have the invariant
curve $g=0$ of degree nine and genus one and cofactor $K=9y.$
\begin{itemize}
\item[(i)]
\[\dot{z}=zy+1,\quad
\dot{y}=3y^2-\frac{8}{13}qzy-\frac{24}{169}q^2z^2+q\]

\[\begin{array}{cc}
g=y^3-\frac{9}{26}q^2z-\frac{2784}{28561}z^5q^4+\frac{296}{2197}z^3q^3+\frac{6144}{371293}z^7q^5-\frac{4096}{
4826809}z^9q^6+\frac{9}{26}yq-\frac{60}{169}yq^2z^2\\
\\
+\frac{960}{2197}yz^4q^3-\frac{1536}{28561}yz^6q^4-\frac{12}{13}y^2qz-\frac{120}{169}y^2z^3q^2=0
\end{array}%
\]

\item[(ii)]
\[\dot{z}=zy+1,\quad
\dot{y}=3y^2-\frac{14}{25}qzy-\frac{84}{125}q^2z^2+q\]

\[\begin{array}{cc}
g=y^3+\frac{772}{3125}z^3q^3-\frac{39648}{390625}z^5q^4-\frac{9}{25}q^2z+\frac{9}{25}yq-\frac{312}{625}yq^2z^2+\\
\\
\frac{2688}{15625}yz^4q^3-\frac{9408}{78125}yz^6q^4-\frac{21}{25}y^2qz-\frac{192}{625}y^2z^3q^2-\frac{87808}{
9765625}z^9q^6+\frac{65856}{1953125}z^7q^5=0\end{array}\]

\item[(iii)]

\[\dot{z}=zy+1,\quad
\dot{y}=3y^2-\frac{88}{53}qzy-\frac{3264}{2809}q^2z^2+q\]

\[\begin{array}{cc}g=y^3-\frac{9}{106}q^2z+\frac{7655424}{7890481}z^5q^4-\frac{190316544}{418195493}z^7q^5-
\frac{105344}{148877}z^3q^3-\frac{132}{53}y^2qz+\frac{3840}{2809}y^2z^3q^2+\\
\\
\frac{1568669696}{22164361129}z^9q^6+\frac{9}{106}yq+
\frac{6240}{2809}yq^2z^2
-\frac{337920}{148877}yz^4q^3+\frac{4325376}{7890481} yz^6q^4=0
\end{array}\]

\end{itemize}
\end{corollary}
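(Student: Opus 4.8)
The plan is to verify directly, for each of the three systems, that $g=0$ is invariant with the stated cofactor, and then to treat the genus assertion separately. Recall that $g=0$ is an invariant curve of the field $X=\dot z\,\partial_z+\dot y\,\partial_y$ with cofactor $K$ exactly when the polynomial identity $\dot z\,\partial_z g+\dot y\,\partial_y g=Kg$ holds. Here $K=9y$ and in every case $\dot z=zy+1$, while $\dot y=3y^2+Py+Q$ with $P=\alpha qz$ and $Q=\beta q^2z^2+q$ for the indicated constants $\alpha,\beta$. First I would write $g=y^3+A(z)y^2+B(z)y+C(z)$, where $A,B,C$ are the polynomials in $z$ (with $q$ a parameter) read off from the statement, and substitute.

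Since $\dot z$ is linear and $\dot y$ quadratic in $y$, both sides of $\dot z\,\partial_z g+\dot y\,\partial_y g=9yg$ are polynomials of degree four in $y$; matching the coefficients of $y^4,\dots,y^0$ turns the single identity into the chain
\[
 zA'-3A=-3P,\qquad zB'-6B=-\left(A'+2AP+3Q\right),
\]
\[
 zC'-9C=-\left(B'+BP+2AQ\right),\qquad C'+BQ=0,
\]
the coefficient of $y^4$ giving $9=9$ automatically. The first three are triangular linear ODEs in $z$: the first determines $A$ up to the coefficient of $z^3$, the second determines $B$ (given $A$) up to the coefficient of $z^6$, and the third determines $C$ (given $A,B$) up to the coefficient of $z^9$ --- precisely the leading monomials appearing in the displayed $g$. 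Thus the corollary reduces to checking that the stated $A,B,C$ solve this triangular system; indeed one can run the recursion forward to rederive them, the last relation $C'+BQ=0$ serving as a closure/compatibility condition.

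The main computational obstacle is exactly this closure relation $C'+BQ=0$, a polynomial identity of degree eight in $z$ whose coefficients carry high powers of $q$ together with the large denominators $13^{k}$, $5^{k}$, $53^{k}$, so the bookkeeping is heavy and error-prone. Conceptually, however, it is the interesting step: imposing it pins down the three free leading coefficients together with the structural constants $\alpha,\beta$, and I expect the resulting elimination to collapse to a single algebraic condition of degree three, whose three roots produce precisely the systems (i)--(iii). I would carry out the check by clearing denominators and comparing coefficients of $z^0,\dots,z^8$ one by one, or equivalently by a single symbolic substitution.

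Finally, the genus-one assertion is not implied by invariance and must be argued on its own. The natural route is to view $g=0$ as a trigonal cover of the $z$-line, i.e. the cubic extension $y^3+Ay^2+By+C=0$ over $\mathbb{P}^1$, and apply Riemann--Hurwitz: with three sheets, $2g-2=-6+R$, so $g=1$ forces total ramification $R=6$. I would therefore compute the discriminant $\Delta(z)=\mathrm{disc}_y(g)$, locate its zeros (the branch points), and verify after desingularizing the projective closure --- both at infinity and over the special fibres --- that the branching totals six; alternatively the geometric genus can be computed directly. Here the delicacy is real: the Newton polygon of $g$ has seven interior lattice points, so a generic curve with these monomials would have genus seven, and genus one reflects that the specific coefficients are highly non-generic, forcing the branch points to collide. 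I regard pinning down the genus rigorously, as opposed to the invariance (a finite verification), as the genuinely subtle part of the argument.
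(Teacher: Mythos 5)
Your proposal is correct, but there is nothing in the paper to compare it against: the corollary is stated with no proof whatsoever, only the introduction's remark that the authors ``apply the normal form of the quadratic fields deduced by J.~Llibre.'' Your verification scheme therefore supplies exactly what the paper leaves implicit, and in a structurally cleaner form. Writing $g=y^3+A(z)y^2+B(z)y+C(z)$ and matching powers of $y$ in $\dot z\,\partial_z g+\dot y\,\partial_y g=9yg$ does give precisely your four relations, and I spot-checked case (i) in full: with $P=-\tfrac{8}{13}qz$, $Q=q-\tfrac{24}{169}q^2z^2$, the displayed $A,B,C$ satisfy $zA'-3A=-3P$, $zB'-6B=-(A'+2AP+3Q)$, $zC'-9C=-(B'+BP+2AQ)$, and the closure relation $C'+BQ=0$, with all the powers of $13$ collapsing as expected. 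Your observation that the homogeneous solutions of the three Euler operators are $z^3,z^6,z^9$ --- exactly the top monomials of $A,B,C$ --- explains the shape of the curves and would let you re-derive the systems rather than merely confirm them, which is more than the paper offers.

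Two caveats. First, as written your text is a plan, not a completed proof: for a statement of this kind the proof \emph{is} the finite symbolic computation, so cases (ii) and (iii) must actually be pushed through the same four identities (routine, no new idea required). Second, the genus-one claim: you are right that it is logically independent of invariance and that it is the delicate point. Your count of seven interior lattice points of the Newton polygon is correct, so a generic curve with these monomials has genus seven, and genus one hinges on the singularities forced by these particular coefficients; your Riemann--Hurwitz setup for the trigonal projection ($2g-2=-6+R$, so one must verify total ramification $R=6$ from the $y$-discriminant, including the behaviour over $z=\infty$) is a sound way to establish it, but it is not executed. Note that the paper does not justify the genus assertion either --- it is asserted, and then used in the proposition bounding the number of algebraic limit cycles --- so on this point your proposal, once the discriminant computation is actually carried out, would go strictly beyond what the authors provide.
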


\section{Quadratic  vector fields with a given  invariant algebraic curves of degree twelve}
In this section we determine five quadratic vector field with
invariant curves of degree twelve and genus one.

\begin{corollary}
The following quadratic differential systems have the invariant
curve $g=0$of degree nine and genus one with cofactor $K=12y$.
\begin{itemize}
\item[(i)]
\[\dot{z}=zy+1,\quad\dot{y}=3y^2+pzy+(-pq-2p^2)z^2+q\]

\[\begin{array}{cc}
g=\frac{1}{36}p^6(q+2p^2)z^{12}+\frac{1}{2}p(\frac{1}{3}p^5q+\frac{2}{3}p^6)z^{10}+
(\frac{1}{3}p^5q+\frac{2}{3}q^6_{11})yz^9+\\
\\
(\frac{1}{2}(\frac{1}{3}qp^3+\frac{5}{3}p^4)(-q_{11}q-2p^2)+\frac{1}{4}p^2(\frac{1}{3}qp^3+\frac{5}{3}p^4)+3/4p^5q+3/2p^6)z^8+\\
\\
p(\frac{1}{3}qp^3+\frac{5}{3}p^4)yz^7+(\frac{17}{12}p(\frac{1}{3}qp^3+\frac{5}{3}p^4)+(\frac{1}{3}qp^3+\frac{5}{3}p^4)y^2+\\
\\
\frac{1}{4}p^5+\frac{1}{3}(\frac{1}{3}qp^3+\frac{5}{3}p^4)q+\frac{5}{4}p^3(-pq-2p^2))z^6+\\
\\
(4p^4+\frac{1}{2}qq^3+\frac{3}{2}(-pq-2p^2)p^2)yz^5+\\
\\
(\frac{19}{16}qp^3+\frac{23}{8}p^4+\frac{89}{48}(-pq-2p^2)p^2+\frac{1}{4}(-pq-2p^2)^2+3p^3y^2)z^4\\
\\
+((qp^2+3p^3+\frac{4}{3}p(-pq-2p^2))y+2p^2y^3)z^3+\\
\\
(\frac{119}{120}qp^2+\frac{59}{120}p(-pq-2p^2)+\frac{59}{60}p^3+(-pq+p^2)y^2+\frac{1}{3}
(-q_{11}q_{20}-2q^2_{11})q_{20})z^2\\
\\
+(2y^3p+(\frac{2}{3}pq+\frac{1}{3}p^2)y)z+y^4+(\frac{2}{3}q+\frac{1}{3}p)y^2+\frac{1}{9}q^2+\frac{1}
{36}p^2+\frac{1}{9}pq=0
\end{array}\]
\item[(ii)]
\[\dot{z}=zy+1,\quad\dot{y}=3y^2-\frac{35}{19}qzy-\frac{525}{361}q^2z^2+q\]

\[\begin{array}{cc}
g=\frac{16748046875}{271737008656}z^{12}q^8-\frac{3349609375}{7150973912}q^7z^{10}+\frac{95703125}{188183524}q^6yz^9\\
\\
+\frac{1003515625}{752734096}q^6z^8-\frac{7109375}{2476099}q^5yz^7+(\frac{203125}{130321}q^4y^2-\frac{16765625}{9904396}q^5)z^6\\
\\
+\frac{1396875}{260642}q^4yz^5+(\frac{1638125}{2085136}q^4-
\frac{39375}{6859}q^3y^2)z^4+(\frac{750}{361}q^2y^3-\frac{22625}{6859}q^3y)z^3+\\
\\
(\frac{155}{2888}q^3+\frac{1875}{361}q^2y^2)z^2+(\frac{-70}{19}qy^3-
\frac{155}{1444}q^2y)z-\frac{1}{5776}q^2+y^4+\frac{1}{19}qy^2=0
\end{array}\]
\item[(iii)]
\[\dot{z}=zy+1,\quad\dot{y}=3y^2-\frac{10}{59}qzy-\frac{150}{3481}z^2q^2+q\]
\[\begin{array}{cc}
g=
-\frac{781250000}{50362840098282103}q^8z^{12}+\frac{312500000}{853607459292917}q^7z^{10}-q^6_{20}\frac{62500000}{14467923038863}yz^9-\\
\\
q^6\frac{159375000}{14467923038863}z^8-q^5\frac{3750000}{245219034557}yz^7+(q^4\frac{375000}{4156254823}y^2+
\frac{230375000}{245219034557}q^5)z^6-\\
\\
q^4_{20}\frac{1425000}{593750689}yz^5+(q^3\frac{-4215000}{70444997}y^2-\frac{82923125}{4156254823}q^4)z^4+\\
\\
(\frac{5291500}{70444997}yq^3+\frac{281000}{1193983}y^3q^2)z^3+(q^2\frac{210750}{1193983}y^2+\frac{64980}{1193983}q^3)z^2+\\
\\
(q\frac{-20}{59}y^3-q^2\frac{129960}{1193983}y)z+\frac{110592}{1193983}q^2+y^4+q\frac{36}{59}y^2=0
\end{array}\]
\item[(iv)]
\[\dot{z}=zy+1,\quad\dot{y}=3y^2-\frac{55}{311}qzy+\frac{363}{96721}q^2z^2+q\]
\[\begin{array}{cc}
g=\frac{78460709418025}{22403871730541898055936}q^8z^{12}+\frac{35663958826375}{36019086383507874688}q^7z^{10}-
\frac{648435615025}{57908499008855104}yz^9q^6
-\\
\\
\frac{10221703240485}{231633996035420416}q^6z^8+\frac{16076916075}{46550240360816}yq^5z^7+(-\frac{292307565}{149679229456}q^4y^2
-\frac{69082021195}{186200961443264}q^5)z^6+\\
\\
\frac{611188545}{299358458912}q^4_{20}yz^5+(-\frac{29164418129}{2394867671296}q^4-\frac{12078825}{240641848}q^3y^2)z^4+\\
\\
(\frac{73205}{386884}q^2y^3+\frac{23645215}{481283696}q^3y)z^3+(\frac{298023}{1547536}q^2y^2+\frac{689337}{12380288}
q^3)z^2+\\
\\
(-\frac{110}{311}qy^3-\frac{689337}{6190144}yq^2)z+\frac{189}{311}y^2q+\frac{2278125}{24760576}q^2+y^4=0
\end{array}\]
\item[(iv)]
\[\dot{z}=zy+1,\quad\dot{y}=3y^2-\frac{187}{107}qzy-\frac{8349}{11449}q^2z^2+q\]
\[\begin{array}{cc}
g=\frac{1227125495297911}{274909788773107216}q^8z^12-\frac{82455072806579}{1284625181182744}q^7z^10+\\
\\
\frac{440936218217}{6002921407396}yz^9q^6+\frac{8241027180045}{24011685629584}z^8q^6-\frac{10932302931}{14025517307}yq^5
z^7+\\
\\
(-\frac{45176577061}{56102069228}q^5+\frac{58461513}{131079601}y^2q^4)z^6+\frac{706852839}{262159202}yq^4z^5+\\
\\
(\frac{1471610833}{2097273616}q^4-\frac{3733455}{1225043}y^2q^3)z^4+(\frac{13310}{11449}y^3q^2-\frac{3587045}{1225043}
yq^3)z^3+\\
\\
(\frac{54087}{11449}q^2_{20}y^2+\frac{7623}{91592}q^3)z^2+(-\frac{7623}{45796}yq^2-\frac{374}{107}y^3q)z+\frac{27}{
183184}q^2+y^4+\frac{9}{107}qy^2=0
\end{array}\]

\item[(v)]
\[\dot{z}=zy+1,\quad\dot{y}=3y^2+qzy-q^2z^2-q\]
\[\begin{array}{cc}
(\frac{1}{64}q^6p-\frac{1}{288}q^8)z^{12}+(\frac{3}{32}q^5p-\frac{1}{48}q^7)z^{10}+(-\frac{1}{24}yq^6+\frac{3}{16}
yq^4p)z^9+\\
\\
(-\frac{5}{96}q^6+\frac{15}{64}q^4p)z^8+(\frac{3}{4}yq^3p-\frac{1}{6}yq^5)z^7+(\frac{3}{4}y^2q^2p-
\frac{13}{72}q^5+\frac{5}{16}q^3p-\frac{1}{6}y^2q^4)z^6+\\
\\
(-\frac{1}{4}q^4+\frac{9}{8}q^2p)yz^5+(-\frac{37}{96}q^4+\frac{15}{64}q^2p+\frac{3}{2}y^2qp)z^4
+(-\frac{5}{6}yq^3+y^3p+\frac{3}{4}yqp)z^3+\\
\\
(\frac{3}{32}qp+\frac{3}{4}y^2p-\frac{17}{48}q^3+\frac{1}{2}y^2q^2)z^2+(2y^3q+\frac{3}{16}yp-\frac{17}{24}y
q^2)z+\frac{1}{64}p+y^4-\frac{1}{3}y^2q-\frac{1}{288}q^2=0
\\
\end{array}\]

\end{itemize}
\end{corollary}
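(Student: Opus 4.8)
The plan is to read the assertion as three separate claims---that $g=0$ is invariant with cofactor $12y$, that it has degree twelve, and that it has genus one---and to dispatch them in increasing order of difficulty, in parallel with the degree-nine corollary and resting on Llibre's normal form \cite{Ll2}. Recall that $g=0$ is an invariant algebraic curve of $X=(zy+1)\partial_z+(3y^2+\alpha zy+\beta z^2+q)\partial_y$ with cofactor $K$ exactly when $X(g)=Kg$ identically. Since $X$ is quadratic, $X(g)$ has degree $\deg g+1$, so the cofactor is of degree one; that it equals $12y$ will emerge from the construction below. The degree-twelve claim is read directly from each listed $g$: the monomial $z^{12}$ is its unique term of top total degree (with nonzero coefficient for the stated parameters), so $\deg g=12$ and the projective closure meets the line at infinity only at $[0:1:0]$.

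For invariance I would not expand $X(g)-12yg$ blindly but exploit the quasi-homogeneous weighting $w(z)=1$, $w(y)=3$ (with $q,\alpha,\beta$ of weight $0$), under which every $g$ in the statement splits into graded pieces $g=g_{12}+g_{10}+\cdots+g_0$ of even weight only. Writing $L:=X-12y$ and sorting by the weight it shifts gives $L=L_{+3}+L_{+1}+L_{-1}+L_{-3}$ with $L_{+3}=zy\partial_z+3y^2\partial_y-12y$, $L_{+1}=\alpha zy\partial_y$, $L_{-1}=\partial_z+\beta z^2\partial_y$ and $L_{-3}=q\partial_y$. The key identity is $L_{+3}(z^ay^b)=(a+3b-12)z^ay^{b+1}$, so $L_{+3}$ annihilates every weight-twelve form: this is precisely why the cofactor must be $12y$ for a degree-twelve curve. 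Imposing $Lg=0$ weight by weight then becomes a triangular recursion determining $g_{10},g_8,\dots,g_0$ from the free leading form $g_{12}=c_0z^{12}+c_1z^9y+c_2z^6y^2+c_3z^3y^3+c_4y^4$, while the lowest-weight equations, together with the solvability condition at each step, pin $\alpha$ and $\beta$ (in terms of $q$, or of $p=\alpha$) to the specific values in items (i)--(v). Verifying each listed family is then the finite, grading-organized check that these recursions close, which I expect to be routine though lengthy.

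The genuine difficulty is the genus. Read as a polynomial in $y$, each $g$ has the form $y^4+A_3(z)y^3+A_2(z)y^2+A_1(z)y+A_0(z)$ with $\deg_zA_k\le 3(4-k)$, so the curve is a four-sheeted cover of the $z$-line whose Newton polygon is the triangle with vertices $(0,0),(12,0),(0,4)$. The generic curve with this data has far larger genus---the smooth plane model has arithmetic genus $\binom{11}{2}=55$---so proving genus one amounts to showing that the singularities absorb exactly the deficiency $54$. I would compute the geometric genus from $g_{\mathrm{geom}}=55-\sum_P\delta_P$, the dominant contribution coming from the unique point at infinity $[0:1:0]$, and resolve that point by the weighted blow-up adapted to $(1,3)$, whose exceptional fibre is governed by the leading form $\Phi=\sum_{k=0}^4 c_kz^{12-3k}y^k$ and its dehomogenization $P(t)=\sum c_kt^k$ under $y=tz^3$. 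Equivalently, the flow of $X$ parametrizes $g=0$ by elliptic functions, which would exhibit genus one directly.

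This is the step I expect to be the main obstacle. The lower-weight pieces of $g$ do influence the resolution, so the genus cannot simply be read off from the quartic $P$ (the same leading-form heuristic would give a quintic in the degree-fifteen case, yet genus one is claimed there too); one must instead carry the weighted blow-up, or the $\delta$-invariant bookkeeping at $[0:1:0]$ and at the finite singular points, through carefully, certifying that the resolved curve is elliptic rather than rational or of higher genus. Once the total deficiency is shown to equal $54$, irreducibility of $g$ together with Riemann--Hurwitz for the degree-four projection to the $z$-line gives genus exactly one, completing the argument for each of (i)--(v).
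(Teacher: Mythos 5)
Your grading framework for the invariance check is correct: with $w(z)=1$, $w(y)=3$ the operator $L=X-12y$ does split as $L_{+3}+L_{+1}+L_{-1}+L_{-3}$ exactly as you write, the identity $L_{+3}(z^ay^b)=(a+3b-12)\,z^ay^{b+1}$ holds, and solving $L(g)=0$ weight by weight is a legitimate and illuminating way to verify (or re-derive) each listed system. For what it is worth, the paper itself contains \emph{no} proof of this corollary --- it is stated bare, with only the limit-cycle proposition proved --- so on the invariance side your proposal supplies structure where the paper supplies none; your recursion would even force repairs to the printed statement, which contains stray symbols $q_{11}$, $q_{20}$, a curve in item (v) involving a parameter $p$ absent from its system, and ``degree nine'' where twelve is meant. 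The problem is that you never carry the check out (``routine though lengthy'' is asserted, not done), and, far more seriously, the genus-one claim --- which you yourself identify as the main obstacle and which is the only part of the corollary that is not a finite polynomial identity --- is left entirely open. You correctly note that the leading-form heuristic cannot settle it (indeed the generic curve with this Newton polygon has genus $15$, the number of interior lattice points of the triangle with vertices $(0,0)$, $(12,0)$, $(0,4)$, so the drop to $1$ depends on the specific coefficients), but you then only list candidate tools: the weighted blow-up at $[0:1:0]$, the $\delta$-invariant bookkeeping, an ``elliptic parametrization by the flow.'' None is executed, and the last remark is circular --- that the flow parametrizes $g=0$ by elliptic functions is precisely the statement ``genus one,'' not an argument for it.

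Two further defects compound the gap. Irreducibility of $g$ is needed both for ``genus'' to be well defined and is invoked in your final sentence, yet it is never addressed; for curves of this size it is itself a nontrivial claim. And the closing step is redundant: once $\sum_P\delta_P=54$ is established, the geometric genus is $1$ by definition of the $\delta$-invariant, so the appeal to Riemann--Hurwitz for the degree-four projection adds nothing and signals that the intended endgame was not thought through. In sum, your proposal proves the easy part (degree twelve), convincingly outlines but does not complete the middle part (invariance with cofactor $12y$), and does not prove the hard part (genus one, plus irreducibility). Since the paper offers no proof at all, there is no argument of the authors to measure yours against; but judged as a standalone proof, the genus step is a genuine gap, not a deferral of routine computation.
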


\begin{proposition}
The quadratic vector fields with invariant algebraic curve of
genus one admits at most two algebraic limit cycles.
\end{proposition}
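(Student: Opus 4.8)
The plan is to reduce the enumeration of algebraic limit cycles to a count of real ovals of the invariant curve, and then to bound that count by the genus through a Harnack-type inequality. The whole argument is topological once the algebro-geometric setup is in place.

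First I would fix an irreducible invariant algebraic curve $C:\{g=0\}$ of genus one for the quadratic field $X=(P,Q)$, and recall that an algebraic limit cycle is a periodic orbit $\gamma$ of $X$ contained in the real locus $C(\R)$. Being a trajectory of the field, $\gamma$ contains no singular point of $X$. I would then use the standard fact that every singular point of an \emph{irreducible} invariant algebraic curve is a singular point of $X$: near a regular point of $X$ a flow-box shows that an invariant curve must locally be a single smooth orbit arc, so it cannot carry a node or a cusp there. Consequently $\gamma$ meets only the smooth locus of $C$. Since $\gamma$ is a compact connected one-dimensional submanifold sitting in the smooth part of $C(\R)$ and invariant under a fixed-point-free flow, it is a full connected component of $C(\R)$, that is, a smooth oval, and conversely the flow on such an oval is conjugate to a rotation, so the oval carries exactly one periodic orbit.

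Next I would pass to the desingularization. Distinct algebraic limit cycles are disjoint, hence lie on distinct smooth ovals of $C(\R)$. Lifting through the normalization $\widetilde{C}\to C$, which is an isomorphism over the smooth locus, each limit-cycle oval corresponds to a distinct connected component of $\widetilde{C}(\R)$, where $\widetilde{C}$ is a smooth real projective curve of genus one. Harnack's theorem now asserts that a smooth real curve of genus $g$ has at most $g+1$ connected components; for $g=1$ this gives at most two. Therefore $X$ admits at most two algebraic limit cycles, as claimed.

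The main technical obstacle I expect lies entirely in the handling of the singular points of $C$. One must confirm rigorously that no limit cycle passes through a singularity of the curve, so that each cycle is a genuine smooth oval, and must be careful that the genus invoked is the \emph{geometric} genus of the normalization, to which Harnack's inequality legitimately applies. A subsidiary point is to reconcile this count with the explicit high-degree affine curves $\{g=0\}$ of the preceding sections, whose many singularities drop the arithmetic genus down to one while leaving at most two real ovals available to be periodic orbits; verifying that the visible ovals in those examples are exactly the components predicted by the normalization is the step that deserves the most attention.
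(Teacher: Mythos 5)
Your proposal takes essentially the same approach as the paper: the paper's entire proof is the single observation that a curve of genus $G$ admits at most $G+1$ ovals (Harnack's bound), which for genus one gives at most two ovals and hence at most two algebraic limit cycles. Your version is the same argument, carried out with the supporting details the paper leaves implicit (limit cycles avoid the curve's singularities, each cycle fills a smooth oval, and the bound applies to the geometric genus of the normalization), so it is correct and not a different route.
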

\begin{proof} The proof follow from the fact that if the algebraic curve has genus
$\textsc{G}$ then admits at most $\textsc{G}+1$ ovals.
\end{proof}

\section{Quadratic  vector fields with a given  invariant algebraic curves of degree fifteen}
We construct three quadratic vector field with invariant curve of
degree fifteen and genus two. In this section we give two of
theses quadratic vector fields.

\begin{corollary}
The following quadratic differential systems have the invariant
curve $g=0$ of degree fifteen  with cofactor $K=15y$ and genus
two.
\begin{itemize}
\item[(i)]
\[\dot{z}=zy+1,\quad\dot{y}=3y^2-6/17qzy-8/289q^2z^2+q\]

\[\begin{array}{cc}
g=y^5-135/1156q^3z+204600/410338673z^9q^7-49546/1419857z^5q^5-\\
\\
31800/24137569z^7q^6-365760/6975757441q^8z^{11}-36864/2015993900449z^15q^{10}+\\
\\
11915/250563z^3q^4-300/289z^3y^4q^2-15/17zy^4q-69120/6975757441yz^{12}q^8+\\
\\
14250/83521yz^4q^4-99000/24137569yz^8q^6-685/4913yq^3z^2+19800/1419857yz^6q^5+\\
\\
190080/410338673yq^7z^{10}-15780/83521y^2z^5q^4+11880/1419857y^2q^5z^7-190/289y^2q^2z-\\
\\
485/4913y^2q^3z^3-31680/24137569y^2z^9q^6-145/289y^3q^2z^2+3600/4913y^3z^4q^3-\\
\\
1320/83521y^3z^6q^4+207360/118587876497q^9z^13+135/1156yq^2+35/51y^3q=0
\end{array}{cc}\]

\item[(ii)]
\[\dot{z}=zy+1,\quad\dot{y}=3y^2-4/3pzy-16/3p^2z^2+p\]

\[\begin{array}{cc}
g=y^5+2621440/19683yz^{12}p^8+2560000/19683yp^6z^8-313600/6561yp^5z^6-\\
\\
3768320/19683yp^7z^{10}+1300/81y^4z^3p^2-10/3y^4zp+265/27y^3p^2z^2-\\
\\
10400/243y^3z^4p^3+474880/6561y^3z^6p^4+143840/2187y^2p^4z^5-949760/6561y^2p^5z^7\\
\\
+35/54y^2p^2z-9130/729y^2z^3p^3+942080/6561y^2z^9p^6-695/486yp^3z^2+\\
\\
7000/729y^4z^4+5/18y^3p+8388608/177147z^{15}p^{10}-10/81p^3z+1015/1458p^4z^3-\\
\\
2512/729p^5z^5+10/81yp^2+92800/6561p^6z^7-7577600/177147p^7z^9+\\
\\
4751360/59049z^11p^8-5242880/59049p^9z^{13}=0
\end{array}{cc}\]

\end{itemize}
\end{corollary}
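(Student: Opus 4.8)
The statement has three parts, and I would treat them in turn: that $\{g=0\}$ is invariant for each system with cofactor $K=15y$, that its degree is fifteen, and that its geometric genus is two. Recall that a curve $\{g=0\}$ is invariant for $\dot z=P$, $\dot y=Q$ exactly when there is a polynomial $K$ with $P\,g_z+Q\,g_y=Kg$. Thus the core of the proof is to verify, for each system, the polynomial identity
\[
(zy+1)\,g_z+\bigl(3y^2+\alpha qzy+\beta q^2z^2+q\bigr)\,g_y-15y\,g=0,
\]
with $(\alpha,\beta)=(-6/17,-8/289)$ in case (i) and its analogue with parameter $p$ and $(\alpha,\beta)=(-4/3,-16/3)$ in case (ii). Note that $K=15y$ is forced rather than assumed: the leading term of $g$ is $y^5$ and $Q$ begins with $3y^2$, so matching the top power of $y$ gives $3\cdot5=15$, leaving $K=15y$ as the only candidate.

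The plan is to organize the verification by the quasi-homogeneous grading $w(z)=-1$, $w(y)=1$, $w(q)=w(p)=2$. Under it both fields are quasi-homogeneous, with $w(\dot z)=0$ and $w(\dot y)=2$, and each displayed $g$ is quasi-homogeneous of weighted degree $5$, so the left-hand side above is quasi-homogeneous of weighted degree $6$. This grading does two things. It is a consistency check, since every monomial of $g$ must have weighted degree $5$: this at once flags the evident misprints in the printed coefficients (for instance a monomial such as $y^4z^4$ carrying no factor of $p$ would have weighted degree $0$ and cannot occur, so that coefficient must be corrected before any verification). And it splits the identity into independent scalar equations indexed by bidegree, reducing the check to the vanishing of each coefficient of $z^iy^j$, a finite and routine symbolic computation once the coefficients are corrected.

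Next I would read off the degree. Treating $q$ (respectively $p$) as a nonzero constant, the exponents $(a,b)$ of the monomials $z^ay^b$ satisfy $a-b=2c-5$ for the power $c$ of the parameter, and the support lies in the half-plane $a+3b\le15$ (the leading edge of the Newton polygon, through $(0,5)$ and $(15,0)$). Since $b\ge0$ this gives $a+b\le a+3b\le15$, with equality only at the monomial $z^{15}$, whose coefficient is nonzero; hence $\deg g=15$. I would also confirm that $g$ is irreducible over $\mathbb{C}(q)$ (respectively $\mathbb{C}(p)$) so that the genus is meaningful, using the Newton polygon together with the simple form of the restriction $g(0,y)$.

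The genus is the substantive point, and here I would exploit the grading once more. The quantities $t:=yz$ and $s:=qz^2$ (respectively $pz^2$) are weight-$0$ invariants of the involution $\iota\colon(z,y)\mapsto(-z,-y)$, and since $\iota^*g=-g$ while $\iota^*z^5=-z^5$, the product $z^5g$ descends to a genuine polynomial $\widetilde G(t,s)$, i.e.\ $z^5g=\widetilde G(yz,qz^2)$. Thus $\{g=0\}$ is stable under the generically free involution $\iota$, and $(z,y)\mapsto(t,s)$ realizes $\{g=0\}$ as a double cover of the plane curve $\{\widetilde G=0\}$. The plan is to show that this quotient curve is rational and that the covering is branched over exactly six points; Riemann--Hurwitz, $2\cdot2-2=2(2\cdot0-2)+R$, then forces $R=6$ and exhibits $\{g=0\}$ as a hyperelliptic curve of genus two. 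The main obstacle is precisely this last step: because $\{g=0\}$ has arithmetic genus $(15-1)(15-2)/2=91$ it is extremely singular, so computing the genus of the quotient and locating its branch points requires a careful desingularization, or an explicit rational parametrization of $\{\widetilde G=0\}$ together with an analysis of the fixed locus of $\iota$ on the normalization. The invariance identity and the degree are mechanical by comparison.
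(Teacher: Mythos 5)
First, a caveat on the comparison: the paper offers no proof of this corollary at all---it is stated bare, like every corollary in the paper, with the verification (presumably a direct symbolic computation resting on the normal form of Llibre cited in the introduction) entirely suppressed. So your proposal cannot be matched against a written argument and must be judged on its own, where it contains one complete piece and one genuine gap. The complete piece: your grading $w(z)=-1$, $w(y)=1$, $w(q)=w(p)=2$ is correct (every printed monomial of $g$ in case (i) has weight $5$), it does force $K=15y$, it correctly flags the weight-inconsistent term $\frac{7000}{729}y^4z^4$ in case (ii) (note moreover that $-a+b+2c=5$ forces $b-a$ odd, while $y^4z^4$ has $b-a=0$, so no insertion of a power of $p$ can repair it: the exponents themselves are corrupt), and your Newton-polygon argument for $\deg g=15$ is an actual proof. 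But the misprint observation cuts against you: a proof whose core is ``verify $P g_z+Q g_y=15y\,g$ by routine symbolic computation'' must be performable on the stated data, and since the printed coefficients are wrong, you would first have to reconstruct the correct $g$ (say, by solving for the weight-$5$ ansatz); you describe this step but do not carry it out, so neither the invariance nor even the well-formedness of case (ii) is established.

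The genuine gap is the genus, which is the only nontrivial assertion of the corollary. Your reduction is correct as far as it goes: $\iota^*g=-g$, and $z^5g=\widetilde G(yz,qz^2)$ with $\widetilde G$ of constant coefficients (the exponent of $q$ automatically equals $(a+5-b)/2$) and degree $10$, so Riemann--Hurwitz would yield genus two from a rational quotient and exactly six branch points. But you never prove either input: the degree-$10$ quotient curve has arithmetic genus $36$, so its rationality requires a desingularization or an explicit parametrization, and the branch count requires analyzing the fixed locus of $\iota$ on the normalization (including the point over the origin, which does lie on the curve since no monomial of $g$ is free of both $z$ and $y$, and the points at infinity). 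You yourself label this ``the main obstacle'' and stop there. As written, then, the proposal proves the degree, outlines but does not execute the invariance check, and leaves the genus-two claim unproven. The double-cover idea is nonetheless genuinely valuable---if completed it would certify hyperellipticity and the genus simultaneously, which is more structural information than the paper, which proves nothing, even gestures at.
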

\section{Quadratic  vector fields with a given  invariant algebraic curve of degree eighteen}
We construct a quadratic vector field with invariant curve of
degree eighteen and genus two.

\begin{corollary}
The following quadratic differential systems have the invariant
curve $g=0$ of degree  eighteen with cofactor $K=18y$ and genus
two.
\[\dot{z}=zy+1,\quad\dot{y}=3y^2-6/17qzy-8/289q^2z^2+q\]

\[\begin{array}{cc}
g=\frac{16777216}{23298085122481}z^{18}q^{12}-\frac{50331648}{1792160394037}q^{11}z^{16}+\frac{12582912}{137858491849}yz^{15}q^{10}+\\
\\
\frac{60555264}{137858491849}z^{14}q^{10}-\frac{26738688}{10604499373}yq^9z^{13}+\\
\\
(-\frac{2818048}{815730721}q^9+\frac{3342336}{815730721}y^2q^8)z^{12}+\frac{20840448}{815730721}yz^{11}q^8+\\
\\
(\frac{11452416}{815730721}q^8-\frac{1728}{4826809}q^6p-\frac{4325376}{62748517}y^2q^7)z^{10}+\\
\\
(-\frac{540672}{4826809}yq^7+\frac{360448}{4826809}y^3q^6)z^9+\\
\\
(\frac{2592}{371293}q^5p+\frac{1695744}{4826809}y^2q^6-\frac{1744896}{62748517}q^7+\frac{288}{28561}y^2q^4p)z^8+\\
\\
(-\frac{196608}{371293}y^3q^5+\frac{964608}{4826809}yq^6-\frac{720}{28561}yq^4p-\frac{64}{2197}y^3q^3p)z^7+\\
\\
(\frac{4}{169}y^4pq^2-\frac{196608}{371293}y^2q^5+\frac{132544}{4826809}q^6-\frac{90}{2197}q^4p-\frac{264}{2197}y^2q^3p+\frac{12288}{
28561}y^4q^4)z^6+\\
\\
(\frac{474}{2197}yq^3p+\frac{18432}{28561}y^3q^4-\frac{59136}{371293}yq^5+\frac{136}{169}y^3pq^2)z^5+\\
\\
(-\frac{5712}{371293}q^5+\frac{241}{4394}pq^3+\frac{10896}{28561}y^2q^4-\frac{69}{338}y^2pq^2-\frac{20}{13}y^4qp)z^4+\\
\\
(-\frac{1}{2}y^3pq-\frac{608}{2197}y^3q^3+\frac{1104}{28561}yq^4+y^5p-\frac{11}{52}ypq^2)z^3+\\
\\
(\frac{177}{2197}q^4+\frac{3}{26}y^2pq+\frac{204}{169}y^4q^2+\frac{1224}{2197}y^2q^3+\frac{3}{4}y^4p-\frac{23}{832}pq^2)z^2+\\
\\
(\frac{3}{16}y^3p+\frac{23}{416}ypq-\frac{354}{2197}yq^3-\frac{180}{169}y^3q^2-\frac{24}{13}y^5q)z+y^6+\\
\\
\frac{15}{2197}q^3+\frac{9}{13}y^4q+\frac{1}{208}pq+\frac{1}{64}y^2p+\frac{24}{169}y^2q^2=0
\end{array}{cc}\]
\end{corollary}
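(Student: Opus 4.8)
Write $X=(zy+1)\,\partial_{z}+\bigl(3y^{2}-\tfrac{6}{17}qzy-\tfrac{8}{289}q^{2}z^{2}+q\bigr)\,\partial_{y}$ for the field. By definition the curve $g=0$ is invariant with cofactor $K=18y$ exactly when
\[
(zy+1)\,\frac{\partial g}{\partial z}+\Bigl(3y^{2}-\tfrac{6}{17}qzy-\tfrac{8}{289}q^{2}z^{2}+q\Bigr)\frac{\partial g}{\partial y}=18\,y\,g .
\]
The plan is to treat the two assertions separately: the invariance identity, which is an explicit polynomial verification, and the genus, which is the genuinely geometric step.

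For the invariance I would first record the quasi-homogeneity that governs the whole family. Assigning the weights $w(z)=-1$, $w(y)=1$, $w(q)=2$, $w(p)=4$ and $w(t)=-1$, the field is quasi-homogeneous, the curve $g$ is quasi-homogeneous of weight $6$, and both sides of the displayed identity are quasi-homogeneous of weight $7$; this fixes in advance which monomials $z^{i}y^{j}q^{k}p^{\ell}$ may occur and so reduces the check to a finite comparison of the given rational coefficients. A useful sanity check is that every denominator in $g$ must be a power of the single prime dividing the denominators of the coefficients of $\dot y$; a mismatch there would flag a transcription error in the coefficients rather than a real failure. Moreover $p$ does not appear in $X$ and enters $g$ only linearly, so writing $g=A+pB$ splits the identity into $X(A)=18yA$ and $X(B)=18yB$, and it suffices to verify these two. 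Here one expects a decisive simplification: the parameter-free part $A$ appears to be the \emph{square} of the degree-nine invariant curve attached to this same field in Section~2. If so, then by the product rule $X(A)=2g_{9}\,X(g_{9})=2g_{9}(9y\,g_{9})=18y\,A$ is automatic from that Corollary, the value of the cofactor being explained by $18y=2\cdot 9y$, and only $X(B)=18yB$ remains, a much shorter computation best finished with computer algebra.

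The genus is the main obstacle. A smooth plane curve of degree $18$ has genus $136$, and even the Newton-polygon bound for $g$, whose support fills the triangle with vertices $(0,0),(18,0),(0,6)$, only gives $40$; the value $2$ is attainable solely because the curve is extremely singular, so naive adjunction is hopeless. I would instead exploit the structure above: since $A=0$ and $B=0$ carry the \emph{same} cofactor $18y$, the rational function $H=A/B$ satisfies $X(H)=0$, i.e. $H$ is a rational first integral, and $g=A+pB=0$ is, away from $B=0$, the fibre $\{H=-p\}$. The geometric genus of a generic fibre is a birational invariant of $H$, and I would compute it by using the quasi-homogeneous scaling to normalize the parameters and then exhibiting an explicit birational map to a hyperelliptic model $w^{2}=P_{6}(u)$, a sextic right-hand side giving genus $\lfloor(6-1)/2\rfloor=2$, consistent with the pattern $\lfloor(m-1)/2\rfloor$ (with $m=\deg g/3$) across the degree $9,12,15,18$ families. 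The delicate point is producing this reduction explicitly, equivalently resolving the high-multiplicity singular locus of $g=0$ and accounting for its contribution, so that the genus drops from the nominal $40$ down to exactly $2$.
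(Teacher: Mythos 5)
Your proposal cannot be measured against the paper's argument, because the paper gives none: this corollary, like the others, is stated as a bare assertion, with no verification of the invariance and no genus computation. So your plan stands alone, and its first half is actually sharper than the text. The framework is correct (with the weights $w(z)=-1$, $w(y)=1$, $w(q)=2$, $w(p)=4$ everything is indeed quasi-homogeneous, and since $p$ does not occur in $X$ the splitting $g=A+pB$ into $X(A)=18yA$ and $X(B)=18yB$ is legitimate), and your denominator sanity check is not a formality here: it fires. Every denominator in $g$ is of the form $2^a13^b$ (e.g.\ $23298085122481=13^{12}$, $4826809=13^{6}$, $832=2^6\cdot 13$), while the printed field has denominators $17$ and $289$; in fact the field in this corollary is verbatim the degree-fifteen field of Section 4. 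Matching coefficients in $X(g)=18yg$ pins down what the field must be: writing $\dot y=3y^2+b\,qzy+c\,q^2z^2+q$ and letting $\alpha,\beta,\gamma$ be the coefficients of $z^{18}q^{12}$, $z^{16}q^{11}$, $yz^{15}q^{10}$ in $g$, the monomials $yz^{16}q^{11}$ and $z^{17}q^{12}$ force $b\gamma=2\beta$ and $c\gamma=-18\alpha$, i.e.\ $b=-\tfrac{8}{13}$, $c=-\tfrac{24}{169}$; moreover, expanding $g=\sum_i g_i(y,q,p)z^i$, the $z^0$-level identity $g_1+(3y^2+q)g_0'=18yg_0$ holds exactly, and the $z^1$-level identity $yg_1+2g_2+(3y^2+q)g_1'+bqyg_0'=18yg_1$ yields $b=-\tfrac{8}{13}$ consistently from four distinct monomials. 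So the curve is right and the field is wrong: the corollary as printed is false, and the correct field is that of Section 2, Corollary (i). Executed honestly, your plan would have detected and repaired exactly this.

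The genuine gap is in your ``decisive simplification''. The $p$-free part $A$ is \emph{not} the square of the degree-nine curve $g_9$ of Section 2(i): the top-weight terms do agree (e.g.\ $(-4096/4826809)^2=16777216/23298085122481$), but $A|_{z=0}=y^6+\tfrac{9}{13}y^4q+\tfrac{24}{169}y^2q^2+\tfrac{15}{2197}q^3$, whereas $(g_9)^2|_{z=0}=\bigl(y^3+\tfrac{9}{26}yq\bigr)^2=y^6+\tfrac{9}{13}y^4q+\tfrac{81}{676}y^2q^2$, and $\tfrac{24}{169}=\tfrac{96}{676}\neq\tfrac{81}{676}$; the $z^1$ coefficients disagree as well. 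Hence $X(A)=18yA$ is not ``automatic from that Corollary'': the difference $A-(g_9)^2=\tfrac{15}{676}q^2\bigl(y^2+\tfrac{4}{13}q+\cdots\bigr)$ is a nonzero polynomial that must itself be proved invariant with cofactor $18y$, so you are back to a direct computer-algebra check of essentially the original size — harmless, but the shortcut evaporates. More seriously, the genus-two claim, which you correctly identify as the real mathematical content, remains only a plan: $H=A/B$ is indeed a rational first integral once both invariance identities are established, but the birational reduction of a generic fibre to a hyperelliptic model $w^2=P_6(u)$ is asserted rather than produced, and neither your proposal nor the paper supplies the resolution of the singular locus (or equivalent adjunction computation) that would bring the genus from the Newton-polygon bound down to exactly $2$.
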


\end{document}